\newtheorem{theorem}{Theorem}
\newdefinition{example}{Example}
\newtheorem{proposition}[theorem]{Proposition}
\newtheorem{corollary}[theorem]{Corollary}
\tikzstyle{every node}=[fill=white,
\journal{.}
\begin{document}

\begin{frontmatter}


 \author{Curtis Nelson\corref{cor1}}
  \ead{curtisgn@gmail.com}
  
  \author{Bryan Shader\corref{cor2}}
  \ead{bshader@uwyo.edu}
\tnotetext[fun]{This research was supported by the University of Wyoming.}
\cortext[cor1]{Corresponding author.}

\title{All pairs suffice \tnoteref{fun}}


\address{Department of Mathematics, University of Wyoming, Laramie, Wyoming, USA}

\begin{abstract}
A P-set of a symmetric matrix $A$ is a set $\alpha$ of indices such that the nullity of the matrix obtained from $A$ by removing rows and columns indexed by $\alpha$ is $|\alpha|$ more than that of $A$.  It is known that each subset of a P-set is a P-set.  It is also known that a set of indices such that each singleton subset is a P-set need not be a P-set.  This note shows that if all pairs of vertices of a set with at least two elements are P-sets, then the set is a P-set.

\end{abstract}

\begin{keyword}
acyclic matrix \sep eigenvalues \sep P-vertex \sep P-set \sep Parter-vertex \sep Parter-Set 

\MSC 15A18 \sep 05C50

\end{keyword}

\end{frontmatter}



\section{Introduction}


Throughout this note, all matrices are real.  Let $A = [a_{ij}]$ be an $\text{n} \times \text{n}$ symmetric matrix.  The graph, $G(A)$, of $A$ has vertices $1,2,\dots, n$ where $\{i,j\}$ is an edge in the graph if and only if $a_{ij} \neq 0$.  Let $\tau$ and $\alpha$ be subsets of $\{1,2,\dots, n\}$.  Then $A[\tau, \alpha]$ denotes the submatrix of $A$ consisting of the rows indexed by $\tau$ and the columns indexed by $\alpha$.  When $\tau = \alpha$, $A[\alpha]$ is used in place of $A[\alpha, \alpha]$.  Similarly, $A(\alpha)$ denotes the principal submatrix of $A$ obtained by deleting the rows and columns indexed by $\alpha$.  Also, $\nu(A)$ denotes the nullity of $A$ and RS$(A)$ denotes the row space of $A$.  

Vertex $i$ is a \emph{downer vertex of $A$} if $\nu(A)-1 = \nu(A(i))$.  Vertex $i$ is a \emph{P-vertex of $A$} if $\nu(A)+1 = \nu(A(i))$.  A set of vertices indexed by a set $\alpha$ is a \emph{P-set of $A$} if $\nu(A)+|\alpha| = \nu(A(\alpha))$.  In this case, we also say that $\alpha$ is a P-set of $A$.  It is known that every non-empty subset of a P-set is a P-set \cite[Proposition 5]{KS1}.  It is also known that a set of P-vertices is not necessarily a P-set \cite[Example 2.4]{JDSSW}, \cite[Example 4.6]{KS}.  In \cite{NS2}, we studied the maximal P-sets of matrices whose graphs are trees and also proved that for a symmetric matrix $A$ whose graph is a tree, a set of indices is a P-set of $A$ if and only if every subset of cardinality two (i.e.~each pair) is a P-set of $A$.  This conclusion came as a corollary of a complicated, technical theorem that identifies the maximal P-sets of $A$.  In this note, we show that the previous result holds for all symmetric matrices $A$, regardless of the graph of $A$; i.e.~we show that that if $A$ is a symmetric matrix, a set $\alpha$ of at least two indices is a P-set of $A$ if and only if each pair of $\alpha$ is a P-set of $A$.

\section{All pairs suffice}\label{ourresults1}

We first recall the following known result, Jacobi's Determinant Identity \cite[p. 24]{HJ}.

\begin{theorem}\label{Jacobi}
Let $A$ be an invertible $n \times n$ matrix and let $\alpha$ be a subset of $\{1,2,\dots, n\}$.  Then $\det(A[\alpha]) = \det(A^{-1}(\alpha))\det(A)$.
\end{theorem}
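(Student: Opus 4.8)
The plan is to reduce to the case where the index set is an initial block and then to exhibit one explicit matrix product whose determinant can be read off in two ways. Permuting the rows and columns of $A$ by a common permutation replaces $A$ by $PAP^{T}$ and $A^{-1}$ by $PA^{-1}P^{T}$, and this operation leaves $\det A$ unchanged while carrying the pair $A[\alpha],\,A^{-1}(\alpha)$ to the corresponding submatrices of the permuted pair. Hence, after relabeling, I may assume $\alpha=\{1,2,\dots,k\}$. I would then partition $A$ and $B:=A^{-1}$ conformally into $2\times 2$ block matrices with blocks $A_{ij}$ and $B_{ij}$ whose $(1,1)$ entries are $k\times k$. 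With this choice $A_{11}=A[\alpha]$ and $B_{22}=A^{-1}(\alpha)$, so the assertion becomes $\det A_{11}=\det(A)\,\det B_{22}$.

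Next I would extract the block equations hidden in $AB=I$; in particular $A_{11}B_{12}+A_{12}B_{22}=0$ and $A_{21}B_{12}+A_{22}B_{22}=I_{n-k}$. These are exactly what is needed to verify the key identity
\[ A\begin{pmatrix} I_k & B_{12} \\ 0 & B_{22} \end{pmatrix}=\begin{pmatrix} A_{11} & 0 \\ A_{21} & I_{n-k} \end{pmatrix}. \]
The left column block of the product reproduces the left column block of $A$ because the multiplying block is $\left(\begin{smallmatrix} I_k\\ 0\end{smallmatrix}\right)$, while the right column block is computed from the two displayed block relations, yielding $0$ in the top block and $I_{n-k}$ in the bottom block. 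Taking determinants, the middle factor on the left is block upper triangular with diagonal blocks $I_k$ and $B_{22}$, so its determinant is $\det B_{22}$, giving the left-hand side determinant $\det(A)\,\det B_{22}$; the right-hand side is block lower triangular with diagonal blocks $A_{11}$ and $I_{n-k}$, so its determinant is $\det A_{11}$. Equating the two gives $\det A[\alpha]=\det(A)\,\det A^{-1}(\alpha)$, as required.

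I do not expect a serious obstacle here: the entire argument is a short block computation. The one genuine design decision — and the thing I would emphasize — is the choice of the auxiliary matrix $\left(\begin{smallmatrix} I_k & B_{12}\\ 0 & B_{22}\end{smallmatrix}\right)$. The more familiar Schur-complement route would compute $B_{22}=(A_{22}-A_{21}A_{11}^{-1}A_{12})^{-1}$ and combine it with $\det A=\det A_{11}\cdot\det(A_{22}-A_{21}A_{11}^{-1}A_{12})$, but that presupposes $A[\alpha]$ is invertible and would then require a density or continuity argument to cover singular $A[\alpha]$. The product identity above avoids this entirely: it uses only the invertibility of $A$ that is already hypothesized, so no separate treatment of singular principal submatrices is needed. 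The remaining work is purely routine verification of the block multiplication and of the two block-triangular determinant evaluations.
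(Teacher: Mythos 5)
Your proof is correct, and there is nothing in the paper to compare it against in detail: the paper states Jacobi's identity as a known result with a citation to Horn and Johnson \cite[p.~24]{HJ} and gives no proof at all, so you have supplied an argument where the authors supplied a pointer. Your block computation checks out: after the (legitimate) reduction to $\alpha=\{1,\dots,k\}$ by simultaneous permutation — which preserves $\det A$ and the determinants of the two relevant submatrices since $PAP^{T}$ has inverse $PA^{-1}P^{T}$ — the relations $A_{11}B_{12}+A_{12}B_{22}=0$ and $A_{21}B_{12}+A_{22}B_{22}=I_{n-k}$ extracted from $AB=I$ do verify
\[
A\begin{pmatrix} I_k & B_{12} \\ 0 & B_{22} \end{pmatrix}=\begin{pmatrix} A_{11} & 0 \\ A_{21} & I_{n-k} \end{pmatrix},
\]
and the two block-triangular determinant evaluations give $\det(A)\det(B_{22})=\det(A_{11})$ as claimed. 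Your emphasized design point is also well taken: the Schur-complement route via $\det A=\det(A_{11})\det(A_{22}-A_{21}A_{11}^{-1}A_{12})$ would presuppose $A[\alpha]$ invertible and then need a continuity or density patch, while your factorization uses only the hypothesized invertibility of $A$; this product-identity argument is in fact essentially the standard textbook derivation of Jacobi's identity. Two cosmetic remarks only: your $2\times2$ partition tacitly assumes $0<k<n$, so for completeness note that the cases $\alpha=\emptyset$ and $\alpha=\{1,\dots,n\}$ hold trivially under the convention that an empty matrix has determinant $1$; and it is worth one sentence to record that reordering indices within $\alpha$ and within its complement is a simultaneous row-and-column permutation of each submatrix and hence determinant-preserving, which is exactly what justifies the relabeling step.
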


We next prove a result that shows useful relationships between the linear dependencies among the rows of a matrix and the downer vertices and P-vertices of the matrix. 

\begin{proposition}\label{independent}
Let $A$ be a symmetric matrix.  Then 

\item{\rm (a)} $i$ is a downer vertex if and only if row $i$ is a linear combination of other rows of $A$.
\item{\rm (b)} If $\alpha$ is a set of P-vertices, then the rows of $A$ indexed by $\alpha$ are linearly independent.

\end{proposition}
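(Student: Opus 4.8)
The plan is to translate both statements into the language of rank, since nullity and rank are complementary for a matrix of fixed size. Writing $r = \operatorname{rank}(A)$, so that $\nu(A) = n - r$, and recalling that $A(i)$ is $(n-1)\times(n-1)$, the downer condition $\nu(A(i)) = \nu(A)-1$ becomes exactly $\operatorname{rank}(A(i)) = r$, while the P-vertex condition $\nu(A(i)) = \nu(A)+1$ becomes $\operatorname{rank}(A(i)) = r-2$. In particular a vertex cannot be simultaneously a downer vertex and a P-vertex, a fact I will exploit in part (b).

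For part (a), I would introduce the intermediate $(n-1)\times n$ matrix $A_{\hat\imath}$ obtained from $A$ by deleting row $i$ alone, and chain the submatrix rank inequalities
\[
\operatorname{rank}(A(i)) \;\le\; \operatorname{rank}(A_{\hat\imath}) \;\le\; \operatorname{rank}(A) = r,
\]
where both inequalities hold because deleting a row or a column cannot increase the rank. For the forward direction, if $i$ is a downer vertex then $\operatorname{rank}(A(i)) = r$, and the chain forces $\operatorname{rank}(A_{\hat\imath}) = r$; since deleting row $i$ did not drop the rank, row $i$ must lie in the span of the remaining rows. For the converse, suppose row $i$ equals $\sum_{j\neq i} c_j\,\text{row}_j$, so that $\operatorname{rank}(A_{\hat\imath}) = r$. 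This is where the symmetry of $A$ is essential: the same coefficients express column $i$ as $\sum_{j\neq i} c_j\,\text{col}_j$, and restricting this identity to the rows indexed by $\{1,\dots,n\}\setminus\{i\}$ shows that column $i$ of $A_{\hat\imath}$ is redundant. Hence deleting column $i$ from $A_{\hat\imath}$ preserves the rank, giving $\operatorname{rank}(A(i)) = \operatorname{rank}(A_{\hat\imath}) = r$, so $i$ is a downer vertex.

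Part (b) then follows immediately by contradiction. If the rows of $A$ indexed by a set $\alpha$ of P-vertices were linearly dependent, a nontrivial relation among them would express one row, say row $i_0$ with $i_0 \in \alpha$, as a linear combination of the remaining rows of $A$. By part (a) this would make $i_0$ a downer vertex, contradicting that $i_0$ is a P-vertex. Therefore the rows indexed by $\alpha$ are linearly independent.

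I expect the only genuinely delicate point to be the converse direction of part (a), where symmetry is indispensable: without it, redundancy of row $i$ would give no control over column $i$, and the rank could still drop when that column is deleted, breaking the equality $\operatorname{rank}(A(i)) = \operatorname{rank}(A_{\hat\imath})$. Everything else reduces to bookkeeping with the rank inequalities and the complementarity of rank and nullity.
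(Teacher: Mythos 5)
Your proposal is correct and follows essentially the same route as the paper: part (a) via the rank reformulation (downer vertex $\Leftrightarrow$ $\operatorname{rank}(A(i))=\operatorname{rank}(A)$ $\Leftrightarrow$ row $i$ redundant), and part (b) by contraposition, extracting from a dependence among the rows indexed by $\alpha$ a single redundant row and invoking (a). The only difference is one of detail: the paper states the equivalence in (a) without elaboration, whereas you carefully isolate the intermediate matrix $A_{\hat\imath}$ and make explicit where the symmetry of $A$ is used, which is a worthwhile clarification but not a different argument.
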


\begin{proof}
Vertex $i$ is a downer vertex of $A$ if and only if rank$(A)$ = rank$(A(i))$.  This occurs if and only if row $i$ is a linear combination of other rows of $A$.

For (b), we prove the contrapositive.  Assume that the rows of $A$ indexed by $\alpha$ are linearly dependent.  Then there is an $i \in \alpha$ such that row $i$ is a linear combination of rows of $A$ indexed by $\alpha\setminus\{i\}$.  By $(a)$, $i$ is a downer vertex of $A$ and hence is not a P-vertex.
\end{proof}

The following gives useful characterizations of a P-set.

\begin{theorem}\label{Theorem1}
Let $$A = 
\left[ \begin{array}{c | c}
B & C \\ \hline
C^T & D
\end{array} \right]$$ be a symmetric $n \times n$ matrix where $B$ is $k \times k$.  Then the following are equivalent:

\item{\rm (a)} $\{1,2,\dots, k\}$ is a P-set
\item{\rm (b)} rank$(A)$ = rank$(D)$ $+$ $2k$
\item{\rm (c)} $\{x: x^TC \in \rm{RS}(D)\} = \{0\}$

\end{theorem}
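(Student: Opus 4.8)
The plan is to dispatch the equivalence of (a) and (b) by pure dimension counting, and then to prove (b) $\Leftrightarrow$ (c) by analyzing when the bordered matrix $A$ attains its largest possible rank relative to $D$.

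First I would observe that $A(\{1,2,\dots,k\}) = D$, so the defining condition in (a), namely $\nu(A) + k = \nu(A(\{1,\dots,k\}))$, reads $\nu(A) + k = \nu(D)$. Writing each nullity as matrix size minus rank, $\nu(A) = n - \operatorname{rank}(A)$ and $\nu(D) = (n-k) - \operatorname{rank}(D)$, this becomes $n - \operatorname{rank}(A) + k = (n-k) - \operatorname{rank}(D)$, which rearranges to $\operatorname{rank}(A) = \operatorname{rank}(D) + 2k$. Every step is reversible, so (a) $\Leftrightarrow$ (b).

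For (b) $\Leftrightarrow$ (c), the key observation is that $A$ is obtained from $D$ by adjoining $k$ rows and $k$ columns, so $\operatorname{rank}(A) \le \operatorname{rank}(D) + 2k$ always, and (b) asserts that this bound is met. I would factor the bound through the bottom block $M = [\,C^T \mid D\,]$: adjoining the $k$ columns of $C^T$ to $D$ gives $\operatorname{rank}(D) \le \operatorname{rank}(M) \le \operatorname{rank}(D) + k$, and adjoining the $k$ top rows then gives $\operatorname{rank}(A) \le \operatorname{rank}(M) + k$. The crux is to identify (c) with the statement $\operatorname{rank}(M) = \operatorname{rank}(D) + k$. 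That equality says precisely that the columns of $C^T$ are linearly independent modulo the column space of $D$, i.e. that $C^T x \in \operatorname{colspace}(D)$ forces $x = 0$; since $D$ is symmetric its column space and row space coincide up to transpose, so transposing converts $C^T x \in \operatorname{colspace}(D)$ into $x^T C \in \operatorname{RS}(D)$, which is exactly condition (c).

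With this identification the two implications fall out. For (b) $\Rightarrow$ (c), the chain $\operatorname{rank}(A) \le \operatorname{rank}(M) + k \le \operatorname{rank}(D) + 2k = \operatorname{rank}(A)$ forces $\operatorname{rank}(M) = \operatorname{rank}(D) + k$, i.e. (c). For (c) $\Rightarrow$ (b), condition (c) already gives $\operatorname{rank}(M) = \operatorname{rank}(D) + k$, so it remains to show $\operatorname{rank}(A) = \operatorname{rank}(M) + k$, i.e. that the top rows $[\,B \mid C\,]$ are linearly independent modulo $\operatorname{RS}(M)$; any dependence $y^T[\,B \mid C\,] = z^T[\,C^T \mid D\,]$ would yield $y^T C = z^T D \in \operatorname{RS}(D)$ and hence $y = 0$ by (c). I expect the main obstacle to be the bookkeeping in this middle step: correctly phrasing ``independent modulo a subspace'' as a rank identity and tracking the transpose and symmetry of $D$ that convert the column condition on $C^T$ into the row condition (c) on $C$. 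The surrounding arithmetic is routine.
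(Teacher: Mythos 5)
Your proof is correct and takes essentially the same route as the paper: (a) $\Leftrightarrow$ (b) by the nullity--rank count, and (b) $\Leftrightarrow$ (c) by squeezing $\operatorname{rank}(A)$ through the rank of the bordering block (you use the bottom rows $[\,C^T \mid D\,]$ where the paper uses the right columns $\left[\begin{smallmatrix} C \\ D \end{smallmatrix}\right]$, a transpose of the same idea). If anything, your treatment of (c) $\Rightarrow$ (b) is more complete than the paper's, which passes from equality in the chain $\operatorname{rank}(A) \le k + \operatorname{rank}\left(\begin{bmatrix} C \\ D \end{bmatrix}\right) \le 2k + \operatorname{rank}(D)$ directly to the equivalence with (c) without noting that (c) must also force equality in the first inequality --- the step you supply explicitly via the dependence argument $y^T C = z^T D \in \mathrm{RS}(D) \Rightarrow y = 0$.
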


\begin{proof}
Note that $\{1,2,\dots,k\}$ is a P-set of $A$ 
\begin{align*}
&\Leftrightarrow           &  \nu(D) &=\nu(A)+k        &       \\
&\Leftrightarrow          &  n-\nu(D)&=n-\nu(A)-k  & \\
&\Leftrightarrow   &  \text{rank}(D)+k &= \text{rank}(A)-k &     \\
&\Leftrightarrow &  \text{rank}(A) &= \text{rank}(D)+2k &
\end{align*} 
Thus (a) and (b) are equivalent.

Now note \begin{equation*}
\text{rank}(A) \leq k+\text{rank}\left(\begin{bmatrix} C \\ D \end{bmatrix}\right) \leq k+k+\text{rank}(D) = 2k + \text{rank}(D) 
\end{equation*}
with equality if and only if (b) holds.  Hence (b) and (c) are equivalent.
\end{proof}

We use Theorem \ref{Theorem1} to give sufficent conditions for a P-set of a principal submatrix to be a P-set of the entire matrx.

\begin{corollary}\label{subsetP-set}
Let $A$ be an $n \times n$ symmetric matrix, $\alpha$ be a set of P-vertices of $A$, and $\beta$ be a subset of $\{1,2,\dots, n\}$ such that $\alpha \subseteq \beta$ and the rows of $A$ indexed by $\beta$ are a basis of RS$(A)$.  If $\alpha$ is a P-set of $A[\beta]$, then $\alpha$ is a P-set of $A$.
\end{corollary}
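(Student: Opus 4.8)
The plan is to verify the rank criterion of Theorem~\ref{Theorem1}(b) for $A$ by comparing it with the corresponding criterion for $A[\beta]$. After a simultaneous permutation of the rows and columns (which preserves every nullity in sight) I may assume $\alpha=\{1,\dots,k\}$. Write $\gamma=\beta\setminus\alpha$ and let $\delta$ be the set of indices lying outside $\beta$, so that $\{1,\dots,n\}=\alpha\cup\gamma\cup\delta$ and $A(\alpha)=A[\gamma\cup\delta]$. Since the rows indexed by $\beta$ form a basis of $\mathrm{RS}(A)$ we have $\mathrm{rank}(A)=|\beta|$, and I would first check that $A[\beta]$ is nonsingular: if $A[\beta]v=0$, extend $v$ by zeros to $\bar v$; the entries of $A\bar v$ indexed by $\beta$ then vanish, and because each row indexed by $\delta$ lies in the span of the rows indexed by $\beta$, the entries indexed by $\delta$ vanish too, so $A\bar v=0$. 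As $A$ is symmetric this says $\sum_{i\in\beta}v_i(\text{row }i)=0$, and the independence of the basis rows forces $v=0$. Hence $\mathrm{rank}(A[\beta])=|\beta|=\mathrm{rank}(A)$.

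The crux is to show that the assumption that $\alpha$ consists of P-vertices forces each row indexed by $\delta$ to lie in the span of the rows indexed by $\gamma$ alone. Fix $d\in\delta$; since the rows indexed by $\beta$ span $\mathrm{RS}(A)$, I can write $\text{row }d=\sum_{i\in\alpha}c_i(\text{row }i)+\sum_{j\in\gamma}c_j(\text{row }j)$. If some $c_{i_0}$ with $i_0\in\alpha$ were nonzero, solving for $\text{row }i_0$ would exhibit it as a linear combination of rows of $A$ other than row $i_0$ (namely row $d$ together with the remaining rows above, each with index different from $i_0$); by Proposition~\ref{independent}(a) this would make $i_0$ a downer vertex, contradicting that $i_0$ is a P-vertex. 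Thus every coefficient on an $\alpha$-row vanishes and row $d$ lies in the span of the rows indexed by $\gamma$. Transposing (or using the symmetry of $A$), the same argument shows each column indexed by $\delta$ lies in the span of the columns indexed by $\gamma$.

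Granting this, the rank of $A(\alpha)=A[\gamma\cup\delta]$ collapses to that of $A[\gamma]$: restricting the row dependencies to the columns in $\gamma\cup\delta$ shows that the rows of $A(\alpha)$ indexed by $\delta$ are combinations of those indexed by $\gamma$, and then restricting the column dependencies to the rows in $\gamma$ shows that the columns indexed by $\delta$ are combinations of those indexed by $\gamma$; hence $\mathrm{rank}(A(\alpha))=\mathrm{rank}(A[\gamma])$. Finally I would assemble the counts. Since $A[\beta](\alpha)=A[\gamma]$, applying Theorem~\ref{Theorem1}(b) to $A[\beta]$ turns the hypothesis that $\alpha$ is a P-set of $A[\beta]$ into $\mathrm{rank}(A[\gamma])=\mathrm{rank}(A[\beta])-2k=|\beta|-2k$. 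Combining the two rank identities gives $\mathrm{rank}(A(\alpha))=|\beta|-2k=\mathrm{rank}(A)-2k$, which is exactly condition~(b) of Theorem~\ref{Theorem1} for $A$; therefore $\alpha$ is a P-set of $A$. The one genuinely delicate step is the middle paragraph, where the P-vertex hypothesis is converted, through Proposition~\ref{independent}(a), into the statement that the rows and columns indexed by $\delta$ never draw on those indexed by $\alpha$; the rest is bookkeeping with the rank characterization.
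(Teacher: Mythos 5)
Your proof is correct and follows essentially the same route as the paper: the heart of both arguments is the observation that, because $\alpha$ consists of P-vertices, Proposition~\ref{independent} forces every row indexed outside $\beta$ to lie in the span of the rows indexed by $\beta\setminus\alpha$ alone, which lets the Theorem~\ref{Theorem1} criterion transfer from $A[\beta]$ to $A$. The only difference is that you close the argument via the rank condition (b) of Theorem~\ref{Theorem1}, which requires your additional (correct) verification that $A[\beta]$ is nonsingular, whereas the paper uses the equivalent row-space condition (c) and thereby avoids that step.
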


\begin{proof}  Assume $\alpha$ is a P-set of $A[\beta]$.  Without loss of generality, $$A = \left[ \begin{array} {c | c | c}
B & C & E \\ \hline
C^T & D & F \\ \hline
E^T & F^T & G
\end{array} \right], \; A[\alpha] = B \text{ and } A[\beta] = \left[ \begin{array}{c | c} B & C \\ \hline C^T & D \end{array} \right].$$  Since $\alpha$ is a P-set of $A[\beta]$, by Theorem \ref{Theorem1}, $\{x: x^TC \in \text{RS}(D)\} = \{0\}$.  Hence $\{x: x^T\begin{bmatrix} C & E \end{bmatrix} \in \text{RS}(\begin{bmatrix} D &  F \end{bmatrix})\} = \{0\}$.  

We now show that $$\text{RS}(\begin{bmatrix} D & F \end{bmatrix}) = \text{RS}\left( \begin{bmatrix} D & F \\ F^T & G \end{bmatrix} \right).$$  
Since the rows of $A$ indexed by $\beta$ are a basis of RS$(A)$, $$\text{RS}(\begin{bmatrix} E^T & F^T & G  \end{bmatrix}) \subseteq \text{RS}\left(\begin{bmatrix} B & C & E \\ C^T & D & F \end{bmatrix}\right).$$  
Since $\alpha$ is a set of P-vertices of $A$, by Proposition \ref{independent}, no row of $\begin{bmatrix} B & C & E \end{bmatrix}$ is a linear combinatation of other rows of $A$.  It follows that $\text{RS}(\begin{bmatrix} E^T & F^T & G  \end{bmatrix}) \subseteq \text{RS}\left( \begin{bmatrix}C^T & D & F \end{bmatrix} \right)$.  In particular, each row of $\begin{bmatrix} F^T & G \end{bmatrix}$ is in $\text{RS}(\begin{bmatrix} D & F \end{bmatrix})$ and hence $$\text{RS}(\begin{bmatrix} D & F \end{bmatrix}) = \text{RS}\left(\begin{bmatrix} D & F \\ F^T & G \end{bmatrix}\right).$$
Therefore $$ \left\{ x: x^T\begin{bmatrix} C & E \end{bmatrix} \in \text{RS}\left( \begin{bmatrix} D & F \\ F^T & G \end{bmatrix} \right)\right\} = \{0\}$$ and thus, by Theorem \ref{Theorem1}, $\alpha$ is a P-set of $A$.
\end{proof}

We now show that all pairs suffice in the case the matrix is nonsingular.

\begin{theorem}\label{nonsing2toall}
Let $M$ be a nonsingular, symmetric $n \times n$ matrix and $\alpha$ be a set of cardinality at least two such that each pair of $\alpha$ is a P-set of $M$.  Then $\alpha$ is a P-set of $M$.
\end{theorem}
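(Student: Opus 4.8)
The plan is to pass to the inverse $N = M^{-1}$ and reduce the whole statement to one clean principle: for a nonsingular matrix, being a P-set means a principal block of $N$ vanishes. The central reformulation I would establish is the lemma that, for a nonsingular symmetric $M$ with inverse $N$ and any index set $\gamma$, one has $\nu(M(\gamma)) = \nu(N[\gamma])$. Granting this, since $M$ is nonsingular we have $\nu(M)=0$, so $\gamma$ is a P-set of $M$ precisely when $\nu(M(\gamma)) = |\gamma|$; by the lemma this is equivalent to $\nu(N[\gamma]) = |\gamma|$, and since $N[\gamma]$ is a square matrix of order $|\gamma|$, this holds if and only if $N[\gamma]$ is the zero matrix. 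Thus the key fact is: $\gamma$ is a P-set of $M$ if and only if $N[\gamma] = 0$.

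To prove the lemma I would take $\gamma = \{1,\dots,k\}$ without loss of generality and partition conformally as $M = \left[\begin{smallmatrix} B & C \\ C^T & D\end{smallmatrix}\right]$ and $N = \left[\begin{smallmatrix} P & Q \\ Q^T & R\end{smallmatrix}\right]$, so that $D = M(\gamma)$ and $P = N[\gamma]$. Reading the block equations off $MN = I$, for $x \in \ker P$ the relations $BP + CQ^T = I$ and $C^TP + DQ^T = 0$ give $DQ^Tx = 0$, so $x \mapsto Q^Tx$ maps $\ker P$ into $\ker D$; it is injective because $\left[\begin{smallmatrix} P \\ Q^T\end{smallmatrix}\right]$ is the first $k$ columns of the nonsingular $N$ and hence has full column rank, yielding $\nu(P) \le \nu(D)$. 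Dually, reading $NM = I$, for $y \in \ker D$ the relations $Q^TC + RD = I$ and $PC + QD = 0$ give $Cy \in \ker P$, and $y \mapsto Cy$ is injective since $\left[\begin{smallmatrix} C \\ D\end{smallmatrix}\right]$ is the last $n-k$ columns of the nonsingular $M$, yielding $\nu(D) \le \nu(P)$. Hence $\nu(D) = \nu(P)$. (This is the nullity theorem, and could alternatively be extracted from the general-minor form of Jacobi's identity in Theorem~\ref{Jacobi}.)

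With the reformulation in hand the theorem is immediate. For each pair $\{i,j\} \subseteq \alpha$, being a P-set gives $N[\{i,j\}] = 0$, i.e.\ $N_{ii} = N_{jj} = N_{ij} = 0$. Since $|\alpha| \ge 2$, every index of $\alpha$ lies in some pair, so all diagonal entries $N_{ii}$ with $i \in \alpha$ vanish, and every off-diagonal entry $N_{ij}$ with $i,j \in \alpha$, $i \ne j$, vanishes because $\{i,j\}$ is itself a pair. Therefore $N[\alpha] = 0$, and the reformulation gives that $\alpha$ is a P-set of $M$.

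The main obstacle is the lemma $\nu(M(\gamma)) = \nu(N[\gamma])$: once this nullity/inverse correspondence is secured, the ``all pairs suffice'' claim collapses to the trivial observation that a symmetric matrix is zero as soon as all of its $2 \times 2$ principal submatrices are. I expect the only place needing genuine care to be the block-equation bookkeeping for the two injective maps; everything afterward is a routine check on the entries of $N$.
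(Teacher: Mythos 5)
Your proof is correct, and while it shares the paper's overall strategy---pass to $N = M^{-1}$ and show that $N[\alpha] = 0$---the engine driving it is genuinely different. The paper never establishes your lemma $\nu(M(\gamma)) = \nu(N[\gamma])$ (the nullity theorem of Fiedler--Markham/Gustafson); it works entirely with determinants via Jacobi's identity (Theorem \ref{Jacobi}). In the forward direction it gets $n_{ii} = 0$ from the singleton minors and then $n_{ij} = 0$ from $\det N[\{i,j\}] = -n_{ij}^2 = 0$, and in the reverse direction---where your nullity correspondence makes everything immediate---it must argue that $N[\alpha] = 0$ forces $\det N[\tau \cup \alpha] = 0$ for every $\tau$ in the complement of $\alpha$ with $|\tau| \le |\alpha| - 1$ (a column-dependence count), translate this back through Jacobi into the vanishing of all principal minors of $M(\alpha)$ of order at least $n - 2|\alpha| + 1$, and then invoke the fact that the rank of a symmetric matrix equals the order of its largest nonzero principal minor to conclude $\nu(M(\alpha)) \ge |\alpha|$. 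Your block-equation proof of the nullity correspondence checks out: both maps $x \mapsto Q^T x$ and $y \mapsto Cy$ land where you claim and are injective for exactly the full-column-rank reasons you cite. What your route buys is a clean two-sided characterization---$\gamma$ is a P-set of $M$ if and only if $N[\gamma] = 0$---that collapses both halves of the paper's argument into one line each and sidesteps the symmetric-rank-from-principal-minors fact entirely; what the paper's route buys is that it leans only on the already-quoted Jacobi identity rather than requiring a separate lemma. One small caution: Theorem \ref{Jacobi} as stated covers only principal minors, so your parenthetical claim that the nullity theorem ``could be extracted'' from it really requires the general (non-principal) minor form of Jacobi's identity; your self-contained block-equation proof makes this moot.
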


\begin{proof}
Let $N = M^{-1}$.  Since each non-empty subset of a P-set is a P-set, the hypotheses imply that $\det M(i) = 0$ for each $i \in \alpha$.  By Theorem \ref{Jacobi}, $\det N[i] = 0$ for each $i \in \alpha$.  Thus $n_{ii} = 0$ for each $i \in \alpha$.  Also, for each subset $\gamma$ of $\alpha$ of cardinality $2$, we know that $\det M(\gamma) = 0$.  Thus $\det N[\gamma] = 0$.  Since the diagonal entries of the $2 \times 2$, symmetric matrix $N[\gamma]$ are $0$, $N[\gamma] = 0$ for each such $\gamma$.  Hence $N[\alpha] = 0$.  

Now consider a subset $\tau$ of $\{1,2,\dots,n\}\setminus \alpha$ of cardinality at most $|\alpha|-1$.  It is easy to verify that the columns of $N[\tau, \alpha]$ are linearly dependent, and hence the columns of $N[\tau \cup \alpha]$ indexed by $\tau$ are linearly dependent.  We conclude that $\det N[\tau \cup \alpha] = 0$.  Thus, by Theorem \ref{Jacobi}, every principal minor of $M(\alpha)$ of order at least $n-2|\alpha|+1$ is $0$.  Since $M(\alpha)$ is a symmetric matrix of order $n-|\alpha|$, the nullity of $M(\alpha)$ is greater than $|\alpha|-1$.  We conclude that $\alpha$ is a P-set of $M$.  
\end{proof}

Lastly, we prove the main result of the note.

\begin{theorem}\label{2toall}
Let $A$ be an $n \times n$ symmetric matrix and $\alpha$ a set of indices of cardinality at least two.  Then $\alpha$ is a P-set of $A$ if and only if every pair of $\alpha$ is a P-set of $A$.
\end{theorem}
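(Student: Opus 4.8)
The plan is to reduce the statement to the nonsingular case established in Theorem \ref{nonsing2toall}, with Corollary \ref{subsetP-set} serving as the bridge back to $A$. The forward implication is free: every non-empty subset of a P-set is a P-set \cite[Proposition 5]{KS1}, so if $\alpha$ is a P-set then every pair of $\alpha$ is a P-set. I would therefore devote the argument to the converse.

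Assume every pair of $\alpha$ is a P-set of $A$. Each singleton $\{i\}$ with $i \in \alpha$ is a subset of a pair and hence a P-set, so $\alpha$ is a set of P-vertices of $A$; by Proposition \ref{independent}(b) the rows of $A$ indexed by $\alpha$ are linearly independent. I would extend $\alpha$ to a set $\beta \supseteq \alpha$ for which the rows of $A$ indexed by $\beta$ form a basis of $\mathrm{RS}(A)$, and put $M = A[\beta]$.

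Two facts about $\beta$ drive the proof. First, $M$ is nonsingular. Since the rows indexed by $\beta$ span $\mathrm{RS}(A)$, we may write $A = R\,A[\beta,\{1,\dots,n\}]$ for some $n \times |\beta|$ matrix $R$ whose rows indexed by $\beta$ form the identity; taking the columns indexed by $\beta$ gives $A[\{1,\dots,n\},\beta] = R\,M$. As the columns of $A$ indexed by $\beta$ form a basis of the column space (by symmetry), $A[\{1,\dots,n\},\beta]$ has rank $|\beta|$, and since $M$ is $|\beta| \times |\beta|$ this forces $\mathrm{rank}(M) = |\beta|$. Second, every pair of $\alpha$ is a P-set of $M$, and this is the crux; it is exactly a converse of Corollary \ref{subsetP-set}. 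Fixing a pair $\{i,j\} \subseteq \alpha$ and using the block decomposition induced by $\{i,j\}$, $\beta\setminus\{i,j\}$, and the complement of $\beta$ (so that $A[\beta] = \begin{bmatrix} B & C \\ C^T & D\end{bmatrix}$), the proof of Corollary \ref{subsetP-set} shows that the P-vertex and basis hypotheses already yield $\mathrm{RS}(\begin{bmatrix} E^T & F^T & G \end{bmatrix}) \subseteq \mathrm{RS}(\begin{bmatrix} C^T & D & F \end{bmatrix})$, equivalently $E = CP^T$ and $F = DP^T$ for some matrix $P$. Using these relations (and the resulting equality $\mathrm{RS}\begin{bmatrix} D & F \\ F^T & G\end{bmatrix} = \mathrm{RS}\begin{bmatrix} D & F\end{bmatrix}$), the criterion of Theorem \ref{Theorem1}(c) for the pair in $A$ and the criterion for the pair in $A[\beta]$ coincide, so $\{i,j\}$ is a P-set of $A[\beta] = M$.

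With these two facts, the proof concludes quickly. Applying Theorem \ref{nonsing2toall} to the nonsingular matrix $M$ and the set $\alpha$ — which has cardinality at least two and all of whose pairs are P-sets of $M$ — shows that $\alpha$ is a P-set of $M = A[\beta]$. Then Corollary \ref{subsetP-set}, applied with $\alpha$ a set of P-vertices of $A$, $\beta \supseteq \alpha$ indexing a basis of $\mathrm{RS}(A)$, and $\alpha$ a P-set of $A[\beta]$, gives that $\alpha$ is a P-set of $A$. The one genuinely delicate point is the second fact: upgrading the one-way Corollary \ref{subsetP-set} to the equivalence that transfers the pair conditions down to $M$; the nonsingularity of $M$ and the final assembly are then routine.
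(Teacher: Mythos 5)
Your proposal is correct and follows the same architecture as the paper's proof: pass to a nonsingular principal submatrix $A[\beta]$ with $\alpha\subseteq\beta$ and $\beta$ indexing a basis of $\mathrm{RS}(A)$, apply Theorem \ref{nonsing2toall} there, and return to $A$ via Corollary \ref{subsetP-set}. The one place you diverge is the step you rightly flag as delicate: transferring the hypothesis ``each pair of $\alpha$ is a P-set of $A$'' down to $A[\beta]$. You do this by upgrading Corollary \ref{subsetP-set} to an equivalence, extracting $E=CP^{T}$ and $F=DP^{T}$ from the row-space containment in its proof and checking that the two instances of the criterion in Theorem \ref{Theorem1}(c) then define the same null set; this is valid (if $x^{T}C=y^{T}D$ then $x^{T}\begin{bmatrix}C & E\end{bmatrix}=y^{T}\begin{bmatrix}D & F\end{bmatrix}$, so a witness against the pair in $A[\beta]$ is a witness against it in $A$). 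The paper instead dispatches this step with a two-line interlacing count: $A[\beta\setminus\gamma]$ is obtained from $A(\gamma)$ by deleting $\nu(A)$ rows and columns, so $\nu(A[\beta\setminus\gamma])\geq\nu(A(\gamma))-\nu(A)=2$, and since $A[\beta]$ is nonsingular the pair $\gamma$ is a P-set of $A[\beta]$. Your route buys a genuine strengthening of Corollary \ref{subsetP-set} to a biconditional (under the P-vertex and basis hypotheses), which may be of independent interest; the paper's count is shorter and avoids revisiting the block computation. Your explicit verification that $A[\beta]$ is nonsingular (via $A[\{1,\dots,n\},\beta]=RM$ and the rank of the columns indexed by $\beta$) is also a correct justification of a fact the paper simply asserts when choosing $\beta$.
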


\begin{proof}
First assume $\alpha$ is a P-set of $A$.  Since every subset of a P-set is a P-set, each pair is a P-set of $A$.  

Conversely, assume each pair of $\alpha$ is a P-set of $A$.  By Proposition \ref{independent}, the rows of $A$ indexed by $\alpha$ are linearly independent.  Thus there exists a set $\beta$ such that $\alpha \subseteq \beta$, $|\beta| = n-\nu(A)$, and $A[\beta]$ is nonsingular.  It follows that the rows of $A$ indexed by $\beta$ are a basis of the row space of $A$.

Consider a subset $\gamma$ of $\alpha$ of cardinality $2$.  By assumption, $A(\gamma)$ has nullity $\nu(A)+2$.  Thus $A[\beta\setminus\gamma]$, which is obtained from $A(\gamma)$ by deleting $\nu(A)$ rows and columns, has nullity at least $\nu(A)+2-\nu(A)= 2$.  Hence for each pair $\gamma$ of $\alpha$, $\gamma$ is a P-set of $A[\beta]$.  By Theorem \ref{nonsing2toall}, $\alpha$ is a P-set of the nonsingular $A[\beta]$.  By Corollary \ref{subsetP-set}, $\alpha$ is a P-set of $A$.
\end{proof}

Thus if $A$ is a symmetric matrix and $\alpha$ is a set of indices of cardinality at least two, then the condition that $\nu(A)+2 = \nu(A\{i,j\})$ for each subset $\{i,j\}$ of $\alpha$ implies $\alpha$ is a P-set of $A$.  We note that if $\alpha$ is a set of P-vertices of $A$, then in order to show $\alpha$ is a P-set of $A$, it suffices to show the weaker condition that $\nu(A) < \nu(A(\{i,j\})$ for every subset $\{i,j\}$ of $\alpha$. This is seen as follows.  In \cite[Proposition 4.7]{JS}, it is shown that if $i$ and $j$ are P-vertices of $A$, then $\nu(A)-\nu(A(\{i,j\}) \in \{-2,0\}$.  Combining this with Theorem \ref{2toall} shows that if $\alpha$ is a set of P-vertices of $A$, then $\alpha$ is a P-set of $A$ if and only if $\nu(A) < \nu(A(\{i,j\}))$ for every subset $\{i,j\}$ of $\alpha$. 









\end{document}